\newtheorem{theorem}{Theorem}[section]
\newtheorem{corollary}[theorem]{Corollary}
\newtheorem{lemma}[theorem]{Lemma}
\newtheorem{proposition}[theorem]{Proposition}
\newenvironment{proof}[1][Proof]{\noindent\textbf{#1.} }
{\hfill \ \rule{0.5em}{0.5em}}
\begin{document}

\title{$C_{2k}$-saturated graphs with no short odd cycles}
\author{Craig Timmons\thanks{Department of Mathematics and Statistics, California State University Sacramento.  
This work was supported by a grant from the Simons Foundation (\#359419).} }

\date{}
\maketitle

\begin{abstract}
The saturation number of a graph $F$, written $\textup{sat}(n,F)$,
is the minimum number of edges in an $n$-vertex $F$-saturated graph.
One of the earliest results 
on saturation numbers is due to 
Erd\H{o}s, Hajnal, and Moon
who determined $\textup{sat}(n,K_r)$ for all $r \geq 3$.
Since then, saturation numbers of various graphs and hypergraphs have been studied. 
Motivated by Alon and Shikhelman's generalized Tur\'{a}n function, 
Kritschgau et.\ al.\ defined $\textup{sat}(n,H,F)$ to
be the minimum number of copies of $H$ in an $n$-vertex
$F$-saturated graph.
They proved, among other things, that 
$\textup{sat}(n,C_3,C_{2k}) = 0$ for all $k \geq 3$ and 
$n \geq 2k +2$.  We extend this result to all odd cycles
by proving that for any odd integer $r \geq 5$, $\textup{sat}(n , C_r ,C_{2k} ) = 0$ 
 for all $2k \geq r+5$ and $n \geq 2kr$.    
\end{abstract}


\section{Introduction}

Let $F$ be a graph.  A graph $G$ is \emph{$F$-saturated} if $G$ does not contain $F$ as 
a subgraph and adding any missing edge to $G$ creates a copy of $F$.
Write $\textup{sat}(n, F)$ for the minimum number of edges in 
an $F$-saturated $n$-vertex graph.  The function $\textup{sat}(n, F)$ is the
\emph{saturation number of $F$}.  
Much research has been done on estimating saturation numbers of graphs. 
It is not too difficult to see that
 $\textup{sat}(n,K_3) = n-1$ where the upper bound follows by considering $K_{1,n-1}$.  
 The lower bound is true since any $K_3$-saturated graph must 
 be connected, and a connected $n$-vertex graph has at least $n-1$ edges.  
The case when $F$ is
 an arbitrary complete graph was solved by Erd\H{o}s, Hajnal, and Moon \cite{ehm}.
 
 \begin{theorem}[Erd\H{o}s, Hajnal, and Moon \cite{ehm}]\label{theorem ehm}
For $r \geq 3$,
\[
\textup{sat}(n , K_r) = (r-2) ( n - r +2) + \binom{r-2}{2}.
\]
Furthermore, if $G$ is an $n$-vertex $K_r$-saturated
graph with $\textup{sat}(n , K_r)$ edges, then $G$ is isomorphic to 
$K_{r-2} + \overline{K_{n-r +2}}$.  
\end{theorem}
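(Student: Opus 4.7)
The plan is to prove the formula by exhibiting an extremal graph for the upper bound and using induction on $n$ with $r$ fixed for both the lower bound and uniqueness.

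For the upper bound, I would take $G = K_{r-2} + \overline{K_{n-r+2}}$, which has $\binom{r-2}{2}$ edges inside the clique plus $(r-2)(n-r+2)$ edges joining the clique to the independent set. Its largest clique uses all $r-2$ universal vertices together with at most one independent vertex, so $G$ is $K_r$-free; and every missing edge lies inside the independent part, so restoring any such edge and adjoining the universal $K_{r-2}$ completes a $K_r$, showing that $G$ is $K_r$-saturated.

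For the lower bound, I would induct on $n \geq r$. The base case $n = r$ forces $G = K_r - e$ (a $K_r$-saturated graph on $r$ vertices must be missing exactly one edge), which has $\binom{r}{2} - 1 = 2(r-2) + \binom{r-2}{2}$ edges, matching the formula. For the inductive step, let $v$ be a minimum-degree vertex of $G$. For any non-neighbor $u$ of $v$, saturation forces a $K_{r-2}$ in $N(u) \cap N(v)$, so $\delta(G) \geq r-2$. In the principal case $d(v) = r-2$, the same containment forces $N(v)$ itself to be a $K_{r-2}$; moreover, any $K_r$ produced by adding a missing edge must avoid $v$ (since $v$ has too few neighbors to lie inside a $K_r$), so $G - v$ is still $K_r$-saturated on $n-1$ vertices. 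Induction then yields $e(G) = e(G-v) + (r-2) \geq (r-2)(n-r+2) + \binom{r-2}{2}$. Uniqueness in this case follows by identifying $N(v)$ with the universal $K_{r-2}$ of $G - v \cong K_{r-2} + \overline{K_{n-r+1}}$, since any other $(r-2)$-clique in $G - v$ would leave some non-edge from $v$ to the independent side of $G - v$ with too few common neighbors, breaking saturation.

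The main obstacle I anticipate is ruling out the case $\delta(G) \geq r-1$ for a minimum-edge extremal graph. The naive bound $e(G) \geq n(r-1)/2$ beats the target only for $n \leq (r-1)(r-2)/(r-3)$, so for larger $n$ one must show directly that a minimum-edge $K_r$-saturated graph always contains a vertex of degree exactly $r-2$. A plausible route is to start from a $K_{r-1}$ in $G$ (which exists because every non-edge supplies one via a common $K_{r-2}$) and analyze how vertices outside this $K_{r-1}$ attach to it; the saturation condition should severely restrict these attachments and should eventually exhibit a vertex of degree $r-2$ unless the edge count already exceeds the target.
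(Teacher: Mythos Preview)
The paper does not prove this theorem; it is quoted from \cite{ehm} as background in the introduction, and the paper's own work concerns $C_{2k}$-saturation. There is therefore no proof in the paper to compare your attempt against.

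On the merits of your sketch: the upper bound, the base case, and the inductive step when $\delta(G)=r-2$ are all correct and standard. The gap you yourself flag in the case $\delta(G)\ge r-1$ is genuine and is not closed by what you wrote. For $r\ge 4$ the handshake bound $e(G)\ge n(r-1)/2$ falls below the target once $n>(r-1)(r-2)/(r-3)$, exactly as you observe, and your proposed remedy --- locate a $K_{r-1}$ and study how outside vertices attach to it --- is only a hope, not an argument: nothing in that outline forces a vertex of degree exactly $r-2$ to appear, nor does it produce enough edges directly. The known proofs (the original combinatorial argument in \cite{ehm}, or the later one via the Bollob\'as set-pair inequality) avoid vertex-deletion induction for precisely this reason and instead use a global counting argument. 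If you want to rescue the inductive route you would need an independent lemma showing that every $K_r$-saturated graph with $\delta\ge r-1$ already has strictly more than $(r-2)(n-r+2)+\binom{r-2}{2}$ edges, and proving that lemma is essentially as hard as the theorem itself.
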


The graph $K_{r-2} + \overline{K_{n - r+2 }}$ is the join of a complete
graph with $r-2$ vertices and an independent set with $n-r +2 $ vertices.  

Theorem \ref{theorem ehm} gives an exact result for the saturation 
number of any complete graph.  A theorem of 
K\'{a}szonyi and Tuza \cite{kt} 
gives an upper bound on the saturation number of any graph $F$,
and shows $\textup{sat}(n, F) $ is always at most linear in $n$.

\begin{theorem}[K\'{a}szonyi and Tuza \cite{kt}]\label{theorem kt}
For any graph $F$, there is a constant $C_F > 0$ such that
\begin{equation}\label{inequality kt}
\textup{sat}(n , F) < C_F n.
\end{equation}
\end{theorem}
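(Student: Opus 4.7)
The plan is to exhibit, for each sufficiently large $n$, an $n$-vertex $F$-saturated graph with at most $C_F n$ edges, where $C_F$ depends only on $F$. Set $p = |V(F)|$ and let $v \in V(F)$ be a vertex of minimum degree $d = \delta(F)$. Following the classical template, I would build $G$ in two layers: a \emph{core} $T$ of constant size (in terms of $F$) containing a copy of $F - v$, together with $n - |V(T)|$ additional \emph{leaves}, each adjacent precisely to the same fixed $(d-1)$-element subset $S \subseteq V(T)$, where $S$ corresponds to $d-1$ of the neighbors of $v$ in the embedded copy of $F - v$. The total edge count is then $|E(T)| + (d-1)(n - |V(T)|) \leq (d-1)n + O_F(1)$, yielding the desired linear bound.

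To show $G$ is $F$-free, the key observation is that each leaf has degree $d - 1 < \delta(F)$, so no leaf can appear in the image of an embedding of $F$; hence any copy of $F$ would lie entirely in $T$, which I arrange to be too small. For $F$-saturation, I would add an arbitrary non-edge $e'$ and check three cases. (a) If $e'$ joins a leaf $\ell$ to a core vertex $w \notin S$, then $\ell$ has degree $d$, and mapping $\ell \mapsto v$, $S \cup \{w\} \mapsto N_F(v)$, and the rest of $T$ onto $V(F) \setminus \{v\}$ produces a copy of $F$. (b) If $e'$ joins two leaves $\ell_1, \ell_2$, one plays $v$ and the other plays some neighbor of $v$, with the remainder embedded into $T$. (c) If $e'$ lies entirely in $T$, this case is handled by the construction of $T$ itself.

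The main obstacle is case (b): completing the two-leaf embedding forces $N_F(u) \subseteq N_F[v]$ for some neighbor $u$ of $v$, a condition that fails for general $F$ (for instance for long cycles $C_k$ with $k \ge 5$, where every neighbor of $v$ has a further neighbor outside $N_F[v]$). Resolving this is the technical heart of the Kászonyi--Tuza argument. I would approach it by enriching the core $T$ to contain multiple overlapping pieces of $F$-structure---one tailored to each pair $(v, u)$ with $v$ of minimum degree and $u \in N_F(v)$---so that each possible two-leaf addition can be completed into a copy of $F$ using one of these pieces and a suitably chosen attachment set. Since $F$ is fixed, only constantly many such pieces are required, so $T$ remains of bounded size and the edge count stays $O_F(n)$, yielding the constant $C_F$.
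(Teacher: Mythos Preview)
The paper does not give a proof of this theorem; it is quoted from \cite{kt} purely as background and plays no further role. So there is nothing in the paper to compare your argument against, and your proposal has to stand on its own.

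Your template is the standard one, and you have correctly located the real difficulty in case~(b). But the fix you propose---enlarging the core $T$---cannot close that gap, because the obstruction is not in the core at all: it lives entirely in the leaves' neighbourhoods. Both leaves have neighbourhood exactly $S$ in $G$, so after adding the edge $\ell_1\ell_2$ each leaf has neighbourhood $S\cup\{\text{the other leaf}\}$, a set of size~$d$. Any copy of $F$ in $G+\ell_1\ell_2$ must use the new edge (else it already lies in $G$) and must use both leaves (a leaf not incident to the new edge still has degree $d-1<\delta(F)$ inside the copy). Hence the embedding sends $\ell_1,\ell_2$ to some edge $v_1v_2\in E(F)$, and then $N_F(v_i)\setminus\{v_{3-i}\}$ must land inside the image of $S$ for $i=1,2$. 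Since $|S|=d-1$, this forces $\deg_F(v_1)=\deg_F(v_2)=d$ and $N_F(v_1)\setminus\{v_2\}=N_F(v_2)\setminus\{v_1\}$: $F$ must contain two adjacent minimum-degree vertices that are twins. Whether such a pair exists is a property of $F$ alone, and no amount of extra structure inside $T$ can manufacture one when $F$ has none (as with $C_k$ for $k\ge5$, your own example). With your attachment scheme, $G+\ell_1\ell_2$ is then $F$-free regardless of how $T$ is built.

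The repair in the K\'aszonyi--Tuza argument is not to thicken the core but to change what the leaves see---choosing the attachment set (and the governing parameter, which is finer than $\delta(F)-1$) so that the two-leaf case is handled by the choice of $S$ itself. As written, your proposal is missing that idea, and the sketch does not yet yield a proof.
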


Theorems \ref{theorem ehm} and \ref{theorem kt} are just two results
on graph saturation.  There are many others that concern specific graphs $F$ (cycles, stars, trees, etc.),
as well as saturation in different settings, such as random graphs and hypergraphs.  
The special case where $F$ is a cycle has been studied extensively  
\cite{barefoot, chen1, chen2, fk, gould, zls}.  
The excellent 
survey of Faudree, Faudree, and Schmitt \cite{sat survey} highlights many results and has
an extensive bibliography.

Motivated by a generalization of the Tur\'{a}n function introduced by 
Alon and Shikhelman \cite{as},
Kritschgau, Methuku, Tait and the author \cite{kmtt} defined the following
function which generalizes saturation numbers.
Let $H$ and $F$ be graphs.  Write
\[
\textup{sat}(n , H ,F)
\]
for the minimum number of copies of $H$ in an $n$-vertex $F$-saturated graph.
Note that $\textup{sat}(n,K_2 ,F) = \textup{sat}(n ,F)$ for any graph $F$.

A case that was studied in \cite{kmtt} is when both $H$ and $F$ are cycles.
Some combinations are trivial.  When $r , k \geq 1$, a complete bipartite graph
shows $\textup{sat}(n , C_{2r+1} , C_{2k+1} ) = 0$ for $n \geq 2k+1$.
Others are not nearly as easy.  For example, $\textup{sat}(n , C_3 ,C_4)$ 
which is the minimum number of triangles in a $C_4$-saturated graph.
It is not known if this function is 0 for infinitely many $n$,
or if it is positive for infinitely many $n$, a question first posed in \cite{kmtt}.

The case that is the focus of this paper is when $H$ is an odd cycle and $F$ is an even cycle.  
Kritschgau et.\ al.\ \cite{kmtt} proved that for any $k \geq 5$,
\[
\textup{sat}(n , C_3 , C_k ) = 0 
\]
provided $n \geq 2k +2$.  
Our main result extends this to all odd integers $r \geq 5$.

\begin{theorem}\label{theorem main}
Let $r \geq 5$ be an odd integer and let $2k \geq r +5$.
For any integer  $n \geq 2kr$,
\[
\textup{sat}(n , C_r , C_{2k}) = 0.
\]
\end{theorem}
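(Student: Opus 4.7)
Since $\textup{sat}(n, C_r, C_{2k})$ is the minimum number of copies of $C_r$ in an $n$-vertex $C_{2k}$-saturated graph, it suffices to exhibit a single such graph $G$ that contains no $C_r$. The plan is to construct $G$ explicitly from a bipartite backbone together with two carefully chosen odd-length \emph{handle} paths attached.

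Take $G$ to have a complete bipartite backbone $K_{A,B}$ with $|A| = k-1$ (so the longest cycle of the backbone is $C_{2k-2}$, keeping it $C_{2k}$-free). Between two distinguished vertices $a_1, a_2 \in A$, attach two vertex-disjoint handles through fresh vertices: a handle $P$ of odd length $r+2$ (using $r+1$ new internal vertices) and a handle $P'$ of odd length $2k-1$ (using $2k-2$ new internal vertices). Place the remaining vertices in $B$. The three defining properties of $G$ should be verified as follows. \textbf{No $C_r$:} since the backbone is bipartite, any odd cycle must use at least one handle and therefore has length $(r+2)+2t$ or $(2k-1)+2t$ for some $t \geq 1$, which is at least $r+4 > r$. \textbf{No $C_{2k}$:} backbone cycles are even of length at most $2k-2$; single-handle cycles are odd; the only double-handle cycle has length $(r+2)+(2k-1) = 2k+r+1 \neq 2k$. \textbf{Saturation for $C_{2k}$:} for each non-edge $\{u,v\}$ we exhibit a length-$(2k-1)$ path from $u$ to $v$. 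The core case is $u,v \in B$: concatenate a bipartite $u$-to-$a_1$ path of length $2s_1+1$, the handle $P$ of length $r+2$, and a bipartite $a_2$-to-$v$ path of length $2s_2+1$, producing a $u$-$v$ path of total length $2k-1$ provided $s_1+s_2 = (2k - r - 5)/2$; the hypothesis $2k \geq r+5$ makes this a non-negative integer, and the bound $n \geq 2kr$ keeps $|B|$ large enough to accommodate all required vertices disjointly. The non-edge $\{a_1,a_2\}$ is itself saturated by $P'$ (which has length $2k-1$), and non-edges within $A \setminus \{a_1,a_2\}$ are handled by analogous concatenations ending in extra bipartite steps off $a_1$ or $a_2$.

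The main obstacle is saturation for non-edges incident to internal vertices of $P$ and $P'$: in the bare construction these vertices have degree $2$, so they lack the flexibility to reach most other vertices by a length-$(2k-1)$ path, and a parity analysis shows that roughly half of the would-be paths fail for internal vertices at the ``wrong'' distance from $\{a_1, a_2\}$. The plan is to augment each such internal vertex with additional bipartite edges to selected subsets of $B$, chosen so that each new edge contributes odd cycles only of lengths in $\{r+2, r+4, \dots\}$ and no new even cycle of length $2k$. The inequality $2k \geq r+5$ supplies exactly the parity and length slack needed to make such augmentations possible, and verifying that the augmented construction simultaneously retains $C_{2k}$-freeness, $C_r$-freeness, and full saturation is the crux of the proof.
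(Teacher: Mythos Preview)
Your proposal has a genuine gap that cannot be repaired along the lines you sketch.

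\textbf{The bare construction is not $C_{2k}$-saturated.}  Take two internal vertices $q_j,q_{j'}$ of the long handle $P'$ (say $j<j'$) with $j'-j$ odd and $j'-j\neq r+2$; for instance $q_1$ and $q_4$.  Since every internal vertex of $P'$ has degree~$2$, any $q_j$--$q_{j'}$ path must leave $P'$ at $a_1$ on one end and re-enter at $a_2$ on the other, and a length count shows that the required $a_1$--$a_2$ detour must have length exactly $j'-j$.  But in $G\setminus\{q_1,\dots,q_{2k-2}\}$ the only $a_1$--$a_2$ paths are bipartite paths of even length $2,4,\dots,2k-4$, together with the single odd path $P$ of length $r+2$.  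So for odd $j'-j\neq r+2$ there is no $q_j$--$q_{j'}$ path of length $2k-1$, and $\{q_j,q_{j'}\}$ is an unsaturated non-edge.

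\textbf{The proposed augmentation is impossible, not merely unexecuted.}  You suggest adding edges from internal handle vertices to ``selected subsets of $B$''.  For any internal vertex $q_j$ of $P'$ and any $b\in B$, the edge $q_jb$ creates a $C_{2k}$: if $j$ is even, take a bipartite $b$--$a_1$ path of length $2k-j-1$ (which lies in $\{1,3,\dots,2k-3\}$) and close through $q_1,\dots,q_j$; if $j$ is odd, do the symmetric construction through $a_2$.  So \emph{no} edge from an internal vertex of $P'$ to $B$ can be added without destroying $C_{2k}$-freeness.  Likewise, for most internal vertices $p_i$ of the short handle $P$, an edge $p_ib$ creates a $C_r$ via a bipartite $b$--$a_1$ (or $b$--$a_2$) path of the appropriate odd length.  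The ``parity and length slack'' you invoke from $2k\ge r+5$ does not help here; the obstruction is structural.

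\textbf{How the paper avoids this.}  The paper's construction is quite different in spirit: instead of long thin handles (which are rigid and hard to saturate at their interior), it starts from an $(r+2)$-cycle and blows up roughly two thirds of its vertices into complete-bipartite ``gadgets'' $H_i(k)$ of order about $2k$.  A single lemma records that each $H_i(k)$ contains paths of \emph{every} relevant length between its four vertex classes, so any short ``core path'' on the underlying $(r+2)$-cycle can be padded inside one gadget to length exactly $2k-1$.  The few unblown vertices act as cut vertices, forcing every odd cycle to traverse the whole $(r+2)$-cycle and hence to have length at least $r+2$; duplicating vertices in one of the independent classes then handles all $n$.  The key difference is that every vertex of $G_r(k)$ either lies in a gadget with rich path structure or is one of finitely many degree-$2$ core vertices between gadgets---there are no long degree-$2$ chains to saturate.
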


As mentioned earlier, a complete bipartite graph shows $\textup{sat}(n,C_{2r+1} , C_{2k+1}) = 0$ for 
$n \geq 2k+1$.  This, the results of \cite{kmtt}, and Theorem \ref{theorem main} give the following Corollary.

\begin{corollary}\label{corollary}
For any odd integer $r \geq 3$ and any $k \geq \lceil \frac{r+5}{2} \rceil$, 
\[
\textup{sat}(n, C_r , C_k ) = 0
\]
for all $n \geq 2kr$. 
\end{corollary}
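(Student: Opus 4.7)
The plan is a brief case analysis on the parity of $k$, invoking in each case one of the three ingredients listed just before the corollary: the complete bipartite construction, the result of Kritschgau et al.\ on triangles, or Theorem \ref{theorem main} itself. No new construction or estimate is required; the corollary is a clean repackaging of those three results, so all the work has been done in advance and the proof is a short bookkeeping exercise.

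If $k$ is odd, then both $r$ and $k$ are odd integers at least $3$, so I would take a balanced complete bipartite graph $K_{\lfloor n/2 \rfloor, \lceil n/2 \rceil}$ on the $n$ vertices. Being bipartite, this graph contains no odd cycle at all, and in particular no $C_r$; as remarked in the introduction, it is also $C_k$-saturated once $n$ is comfortably above $k$, which holds automatically under $n \geq 2kr$. The count of $C_r$-copies is then zero, giving $\operatorname{sat}(n,C_r,C_k)=0$.

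If $k$ is even, I would split further on whether $r=3$ or $r \geq 5$. For $r=3$ I would invoke the \cite{kmtt} result quoted in the introduction, $\operatorname{sat}(n,C_3,C_k)=0$ for $k \geq 5$ and $n \geq 2k+2$; both conditions follow from $k \geq \lceil (r+5)/2 \rceil$ and $n \geq 2kr = 6k$. For $r \geq 5$ I would apply Theorem \ref{theorem main} directly, after rewriting the corollary's $C_k$ as the theorem's $C_{2\ell}$ with $\ell = k/2$; the theorem's cycle-length hypothesis translates to the corollary's lower bound on $k$, and its size requirement $n \geq 2\ell r = kr$ is weaker than $n \geq 2kr$, so the theorem applies and gives the desired conclusion.

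The main obstacle here is not really in the corollary — it sits in Theorem \ref{theorem main}, which is the main result of the paper. Once that theorem is in hand, the corollary reduces to a parity split and careful parameter reconciliation, the only subtlety being that the theorem parametrises the even cycle as $C_{2k}$ while the corollary writes $C_k$ directly; this renaming must be performed cleanly before each invocation of the theorem.
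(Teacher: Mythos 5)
Your three-way split (a balanced complete bipartite graph for odd $k$; the result of \cite{kmtt} for $r=3$ and even $k$; Theorem \ref{theorem main} for $r\geq 5$ and even $k$) is exactly the assembly the paper intends --- the paper itself offers nothing beyond the one sentence preceding the corollary. The problem is the step you yourself single out as ``the only subtlety'': the parameter reconciliation does not go through. Writing the corollary's even cycle as $C_{2\ell}$ with $\ell=k/2$, Theorem \ref{theorem main} requires $2\ell\geq r+5$, i.e.\ $k\geq r+5$; but the corollary only assumes $k\geq\lceil\frac{r+5}{2}\rceil=\frac{r+5}{2}$. So for every even $k$ with $\frac{r+5}{2}\leq k\leq r+3$ (for instance $r=5$ and $k\in\{6,8\}$, i.e.\ $\textup{sat}(n,C_5,C_6)$ and $\textup{sat}(n,C_5,C_8)$) the theorem says nothing, and your claim that ``the theorem's cycle-length hypothesis translates to the corollary's lower bound on $k$'' is off by a factor of two. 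The same slippage occurs in your $r=3$ branch: the cited result of \cite{kmtt} needs the forbidden even cycle to have length at least $6$, which does not follow from $k\geq\lceil 8/2\rceil=4$; the excluded case $k=4$ is $\textup{sat}(n,C_3,C_4)$, which the paper explicitly describes as an open problem.

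To be fair, this is a defect of the corollary as printed at least as much as of your write-up: the hypotheses line up only if one reads the corollary's $k$ as playing the role of the theorem's $k$ (so that the forbidden cycle is $C_{2k}$), which is inconsistent with the notation $C_k$. Under the literal reading, the even-$k$ case needs the hypothesis $k\geq r+5$ (while odd $k$ needs nothing beyond $k\geq 3$), and with that correction your argument is complete and matches the intended proof. As written, however, the proposal asserts implications that are false, so it has a genuine gap at precisely the bookkeeping step it promises to perform cleanly; a careful version should either flag the discrepancy or restrict to the parameter range the cited results actually cover.
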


The lower bound on $n$ in Theorem \ref{theorem main} was chosen for simplicity.
Our proof shows that $n$ can be as small as $\frac{r+2}{3} (2k+1)$ in the case that $r \equiv 1 (\textup{mod}~3)$,
as small as $\frac{r+1}{3} (2k +1)$ in the case that $r \equiv 2 (\textup{mod}~3)$, and
finally as small as $\frac{2rk}{3} + \frac{r+6}{3}$ when $r \equiv 0 ( \textup{mod}~3)$.  
Also, the construction proving Theorem \ref{theorem main} has no 
odd cycle of length $r'$ for $3 \leq r' \leq r$ (see Sections \ref{showing cr free} and \ref{finish proof}).

Corollary \ref{corollary} shows that 
$\textup{sat}(n , C_r , C_k) = 0$ for all odd $r \geq 3$ where $k$ and $n$ are large enough
 in terms of $r$.  
The case when $r$ is even is not as well understood.  
In \cite{kmtt}, it is shown that 
$\textup{sat}(n , C_4 ,C_6) \leq \frac{n-1}{5}$ provided 
$n \equiv 1 (\textup{mod}~10)$.  
However, 
like $\textup{sat}(n,C_3,C_4)$,
we do not even know if $\textup{sat}(n ,C_4, C_6)$ is positive for infinitely many $n$. 
For further discussion, see \cite{kmtt}, but in general,
we do not have a complete understanding of $\textup{sat}(n,C_{2r},C_{2k})$. 

The rest of this paper contains the proof of Theorem \ref{theorem main}.
We take a moment here to describe some of the ideas behind the proof.
Let $r \geq 5$ be an odd integer and 
suppose $k$ is an integer with $ 2k \geq r +5$.
The first step is to construct a small $C_r$-free $C_{2k}$-saturated graph on
$\approx \frac{2kr}{3}$ vertices.  Most of the paper is devoted to constructing 
this graph (Section \ref{grk}), and showing that it has the properties we need:
$C_r$-free (Section \ref{showing cr free}), and $C_{2k}$-saturated
(Sections \ref{showing c2k free} and \ref{showing saturation}).
This graph, which we will call $G_r(k)$, will have a set of vertices $B$,
of size $\approx \frac{k}{2}$, that forms an independent set.
Also, all vertices in $B$ are joined to the same $\approx \frac{k}{2}$ vertices in $G_r(k)$.
Using a lemma from \cite{kmtt}, 
for any $t$ we can add $t$ new vertices
$v_1,\dots,v_t$ where each $v_i$ has the same neighborhood as $v \in B$.    
This new graph, obtained by adding these $t$ vertices to $G_r(k)$, will 
still be $C_r$-free and $C_{2k}$-saturated (Section \ref{finish proof}).
Since this can be done for any integer $t \geq 0$, we 
get $\textup{sat}(n , C_r ,C_{2k} )=0$ provided $n \geq | V( G_r (k) ) |$.


\section{Defining the graph $G_r (k)$}

In this section we define the graph $G_r(k)$ and  
certain subgraphs of $G_r(k)$ that are particularly important.
These subgraphs will be denoted by $H_i(k)$.  Additionally, we 
define certain paths in $G_r(k)$ which we will call \emph{core paths}. 
The rough idea is that
given a pair of nonadjacent vertices $z_1$ and $z_2$, 
 we will use a subgraph $H_i(k)$ to extend 
a core path from $z_1$ to $z_2$ to a path of length $2k-1$ from $z_1$ to $z_2$.  
We mention this ahead of time
to alert the reader to note when these important terms are defined.   
  
  
  \subsection{The graph $G_r(k)$}\label{grk}
 
Let $r \geq 5$ be an odd integer and let $k $ be an integer with $2k \geq r+5$.  
The construction of $G_r (k)$ depends on the residue class of $r $ modulo 3.  
The simplest case is when $r \equiv 1 ( \textup{mod}~3)$ so we begin here.
  
 \smallskip
  
Suppose $r \geq 7$ is odd and $r \equiv  1 ( \textup{mod}~3)$.  
Let 
\[
C= a_1, x_2, 3, a_4, x_5, 6 , \cdots , r-1 , a_r , x_{r+1}, r+2 , a_1
\]
be a cycle of length $r+2$.  For each $a_i$, add a new vertex $b_i$ whose only neighbor is $a_i$.
For each $x_{i+1}$, add a new vertex $y_{i+1}$ whose only neighbor is $x_{i+1}$.  This
 same step will be done when $r \equiv 0 ( \textup{mod}~3)$ and 
 when $r \equiv 2 ( \textup{mod}~3)$, so we call this process \emph{appending leaf vertices}.
 At this point, the degree of any $a_i$ or $x_{i+1}$ is 3, the degree of any $b_i$ or $y_{i+1}$ is 1,
 and the degree of all vertices in $\{3,6, \dots , r-1 ,r +2 \}$ is 2.  We 
 replace each $a_i$, $b_i$, $x_{i+1}$, and $y_{i+1}$ with 
 disjoint sets $A_i(k)$, $B_i(k)$, $X_{i+1}(k)$, and $Y_{i+1}(k)$ where 
 \begin{enumerate}
 \item $|A_i(k)| = |B_i (k) | = \lceil \frac{k}{2} \rceil$, $|X_{i+1} (k) | = |Y_{i+1}(k) | = \lfloor \frac{k}{2} \rfloor$,
 \item each $A_i(k)$, $B_i(k)$, $X_{i+1} (k)$, and $Y_{i+1}(k)$ is an independent set,
 \item the pairs $(A_i (k) , B_i (k))$, $(A_i(k) , X_{i+1}(k))$, and $(X_{i+1}(k) , Y_{i+1}(k))$ induce
 complete bipartite graphs,
 \item each vertex of $A_i(k)$ is adjacent to $i-1$ (in the case that $i=1$, each vertex in $A_1 (k)$ is adjacent to 
 $r+2$),
 \item each vertex of $X_{i+1}(k)$ is adjacent to $i+2$,
 \item for all $i \neq j$, $A_i (k) \cap A_j (k) = \emptyset$,  $B_i (k) \cap B_j (k) = \emptyset$,
  $X_{i+1} (k) \cap X_{j+1} (k) = \emptyset$, and  $Y_{i+1} (k) \cap Y_{j+1} (k) = \emptyset$.
 \end{enumerate}
 This completes the construction of $G_r (k)$ when $r \equiv 1 ( \textup{mod}~3)$.    
 Roughly speaking, we start with the cycle $C$, append leaf vertices $b_i$ and $y_{i+1}$ to 
 each $a_i$ and $x_{i+1}$, respectively, and then blow up $a_i$, $b_i$, $x_{i+1}$, $y_{i+1}$, 
 into sets of size $\approx \frac{k}{2}$.

 \smallskip
 
 Now we move onto the case when $r \equiv 2 ( \textup{mod}~3)$.  Here the construction is very
 similar, except we will start with a slightly different cycle of length $r+2$.
 
 Suppose $r \geq 5$ and $r \equiv 2 ( \textup{mod}~3)$.  Let
 \[
C =  a_1 ,x_2 , 3 , a_4 , x_5 , 6 , \cdots , r-2, a_{r-1} , x_r, r+1, r +2 , a_1 
 \]
 be a cycle of length $r+2$.  For each $a_i$ and $x_{i+1}$, append leaf vertices 
 $b_i$ and $y_{i+1}$ to $a_i$ and $x_{i+1}$, respectively.  We then blow up each
 $a_i$, $b_i$, $x_{i+1}$, and $y_{i+1}$ into sets $A_i(k)$, $B_i(k)$,
 $X_{i+1}(k)$, and $Y_{i+1} (k)$ as before.  
 
 \smallskip
 
 Next we address the case when $r \equiv 0 ( \textup{mod}~3)$ and $r \geq 9$.  Again the construction is similar to
 the previous cases, except now we start with the cycle 
 \[
C =  a_1 , x_2 ,3 , a_4 , x_5 , 6 , \cdots      ,r-6, a_{r-5},x_{r-4},r-3,r-2,a_{r-1},x_r, r+1,r+2,a_1
 \]
 which has length $r+2$.  For each $a_i$ and $x_{i+1}$, append leaf vertices 
 $b_i$ and $y_{i+1}$ to $a_i$ and $x_{i+1}$, and blow up
 $a_i$, $b_i$, $x_{i+1}$, and $y_{i+1}$ into $A_i(k)$, $B_i(k)$,
 $X_{i+1}(k)$, and $Y_{i+1} (k)$ as before.  
 
 \smallskip
 
 This completes the construction of $G_r (k)$ in all cases.

 
 \subsection{The graph $H(k)$}\label{hk}
 
 The graph $G_r(k)$ contains several 
 copies of a graph $H(k)$ that we will define now.  This subgraph will be a crucial part
 of the proof when we go to construct paths of length $2k-1$ between nonadjacent 
 vertices.
 Let $k \geq 3$ be an integer.  Let $H (k)$ be the graph whose vertex set is the disjoint union of 
 four independent sets $A(k)$, $B(k)$, $X(k)$, and $Y(k)$ where 
 $|A(k)|= |B(k) | = \lceil \frac{k}{2} \rceil$ and 
 $|X(k) | = |Y(k)| = \lfloor \frac{k}{2} \rfloor$.
 Every vertex in $A(k)$ is adjacent to every vertex in $B(k) \cup X(k)$, and every vertex in
 $X(k)$ is joined to every vertex in $A(k) \cup Y(k)$.  
 We write $A(k) = \{a_1 ,a_2 ,\dots , a_{ \lceil k/2 \rceil } \}$,
 $B(k) = \{b_1 , b_2 , \dots , b_{\lceil k/2 \rceil } \}$, and use similar notation
 for $X(k)$ and $Y(k)$.  
 
 For any $i$, the subgraph of $G_r(k)$ induced by $A_i (k) \cup B_i (k) \cup X_{i+1} (k) \cup Y_{i+1}(k)$
 is isomorphic to $H(k)$.  Write
 \[
 H_i (k)
 \]
 for this copy of $H(k)$ in $G_r(k)$.  An important observation is that since $r \geq 5$, 
 every $G_r(k)$ contains at least two copies of $H(k)$, namely $H_1(k)$ and $H_4(k)$.

\begin{lemma}[Path lengths in $H(k)$]\label{lemma paths}
Let $k \geq 5 $ and let 
\begin{center}
$\alpha_1 \in \{3,5, \dots , 2k-3 \}$, $\alpha_2 \in \{3,5, \dots , 2k-1 \}$, 
\end{center}
and
\begin{center}
$\beta_1 \in \{2,4, \dots , 2k-4 \}$, $\beta_2 \in \{2,4, \dots , 2k-2 \}$.  
\end{center}
For any 
$a \neq a' \in A(k)$, $b \neq b' \in B(k)$, $x \neq x' \in X(k)$, and 
$y \neq y' \in Y(k)$, there is a path of length 

\smallskip
\noindent
(i) $\alpha_1$ from $a$ to $x$, from $b$ to $a$, and from $y$ to $x$,

\smallskip
\noindent
(ii) $\alpha_2$ from $b$ to $y$,

\smallskip
\noindent
(iii) $\beta_1$ from $a$ to $a'$, and from $x$ to $x'$,

\smallskip
\noindent
(iv) $\beta_2$ from $b$ to $b'$, from $y$ to $y'$, from $b$ to $x$, and from $y$ to $a$.
\end{lemma}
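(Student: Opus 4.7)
My plan is to prove each of the assertions (i)--(iv) by establishing base cases of minimum length directly, then lifting to all larger allowed lengths via a uniform length-$2$ ``detour'' operation.

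I begin with the structural observation that $H(k)$ is the union of three complete bipartite blocks, $K_{A(k), B(k)}$, $K_{A(k), X(k)}$, and $K_{X(k), Y(k)}$, glued along the ``hub'' sets $A(k)$ and $X(k)$. Every path in $H(k)$ decomposes into subpaths inside these blocks, transitioning at hub vertices. The base cases are then immediate: for the even $\beta$-cases in (iii) and (iv), length-$2$ paths such as $a, b, a'$; $x, y, x'$; $b, a, b'$; $y, x, y'$; $b, a, x$; and $y, x, a$ serve; for the odd $\alpha$-cases in (i) and (ii), length-$3$ paths such as $a, b_1, a_1, x$; $b, a_1, b_1, a$; $y, x_1, y_1, x$; and $b, a, x, y$ serve.

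The inductive step is an edge-replacement operation. Given a path $P$ of length $\ell$, an edge $pq$ of $P$ lying in some block $K_{S,T}$, and unused vertices $s^* \in S$ and $t^* \in T$, I replace $pq$ by the subpath $p, t^*, s^*, q$, producing a path of length $\ell + 2$ with the same endpoints. A variant detour replaces an $A(k)$-$X(k)$ edge $pq$ (with $p \in A(k), q \in X(k)$) by $p, x^*, y^*, q$, introducing a fresh $Y(k)$-vertex into a path that had not yet visited the $K_{X(k), Y(k)}$ block; an analogous variant brings the $K_{A(k), B(k)}$ block into play from a path that had only used $A(k)$ and $X(k)$. Starting from the base path and iterating these detours through each block in turn, I can grow the path by steps of $2$ up to exactly the claimed maximum length, namely $2k-3$, $2k-1$, $2k-4$, and $2k-2$ in the four cases respectively.

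The main obstacle is checking that an eligible edge and a fresh pair of vertices remain available at every intermediate length, right up to the claimed maximum. A direct counting check, matching the path-decomposition into block-runs, shows that when each block is expanded to its capacity the resulting path length equals the stated upper bound; in particular the reason $\beta_1$ tops out at $2k-4$ rather than $2k-2$ is that a walk on the four-set spine which starts and ends in $A(k)$ must waste one vertex of the $X(k)$-$Y(k)$ excursion at its turn-around. The precise sequence of detours depends on the parity of $k$, since $|A(k)| = |B(k)| = \lceil k/2 \rceil$ while $|X(k)| = |Y(k)| = \lfloor k/2 \rfloor$, but the hypothesis $k \geq 5$ guarantees enough fresh vertices are always on hand to complete each extension step.
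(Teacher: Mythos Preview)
Your strategy is the natural dual of the paper's: the paper exhibits an explicit longest path for each of the eight endpoint types and then remarks that shortening by $2$ is easy (delete a consecutive pair such as $b_ja_j$ or $y_jx_j$), whereas you start from a shortest path and lengthen by $2$ via a detour. Both are perfectly reasonable, and your detour operation is the right inverse of the paper's deletion step.

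However, as written your toolkit has a real gap. Your only mechanism for moving between the two ``wings'' of $H(k)$ is the variant detour applied to an $A(k)$--$X(k)$ edge already on the path. But several of your declared base paths lie entirely in one wing: for instance $b,a_1,b_1,a$ (case (i), $b$ to $a$), $a,b,a'$ (case (iii)), and $b,a,b'$ (case (iv)) contain no $A(k)$--$X(k)$ edge at all, and the symmetric bases $y,x_1,y_1,x$; $x,y,x'$; $y,x,y'$ have the mirror problem. Starting from any of these, every one of your three detours keeps the path inside the same wing, so the length is capped at $2\lceil k/2\rceil -1$ or $2\lfloor k/2\rfloor -1$, well short of the targets $2k-3$, $2k-4$, $2k-2$. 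The fix is easy but must be stated: either choose base paths that already cross the bridge (e.g.\ $b,a',x,a$ for $b$ to $a$, or $a,x,a'$ for $a$ to $a'$), or add a third variant that replaces an $A$--$B$ edge $p\,q$ by $p,\,x^*,\,a^*,\,q$ (and its $X$--$Y$ mirror) to create the needed $A$--$X$ edge.

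Finally, the ``direct counting check'' you invoke is the entire content of the lemma at the top end, and you do not actually perform it. Once you pick workable base paths, you should record, for each of the eight cases, exactly how many detours are available in each block and verify that the total reaches $2k-3$, $2k-1$, $2k-4$, $2k-2$ respectively; this is where the assumption $k\ge 5$ (so that $\lfloor k/2\rfloor\ge 2$ and there is room for at least one $K_{X,Y}$ detour after the bridging step) is used.
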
  
\begin{proof}
We will assume that $k$ is odd as the case when $k$ in even is the same.  
Let $t = \frac{k-1}{2}$.  For each of the assertions (i)-(iv), we will just 
give 
the longest path joining the pair of vertices under consideration.  Given the longest path,
it is not difficult to see how to shorten the path by removing vertices.

\smallskip
\noindent
(i) Paths of length $2k-3$, all of which miss one vertex $y_t \in Y (k)$, and 
one vertex $b_{t+1} \in B(k)$:

\begin{center}
from $a$ to $x$ ~ : ~ $a_1 b_1 a_2 b_2 \cdots a_t b_t a_{t+1} x_1 y_1 x_2 y_2 \cdots x_{t-1} y_{t-1} x_t$,

from $b$ to $a$ ~ : ~ $b_1 a_1 x_1 y_1 x_2 y_2 \cdots x_{t-1} y_{t-1} x_t a_2 b_2 a_3 b_3 \cdots a_t b_t a_{t+1}$,

from $y$ to $x$ ~ : ~ $y_1 x_1 a_1 b_1 a_2 b_2 \cdots a_t b_t a_{t+1} x_2 y_2 x_3 y_3 \cdots x_{t-1} y_{t-1} x_t$.
\end{center}

\smallskip
\noindent
(ii) Path of length $2k-1$ from $b$ to $y$:
\[
b_1 a_1 b_2 a_2 \cdots b_{t+1} a_{t+1} x_1 y_1 x_2 y_2 \cdots x_t y_t.
\]

\smallskip
\noindent
(iii) Paths of length $2k-4$, the first of which misses two vertices in $B(t)$ and one vertex in $Y(t)$; the second
of which misses two vertices in $Y(t)$ and one vertex in $B(t)$:
\begin{center}
from $a$ to $a$ ~ : ~ $a_1 b_1 a_2 b_2 \cdots a_{t-1} b_{t-1} a_t x_1 y_1 x_2 y_2 \cdots x_{t-1} y_{t-1} x_t a_{t+1}$,

from $x$ to $x$ ~ : ~ $x_1 y_1 x_2 y_2 \cdots x_{t-2} y_{t-2} x_{t-1} a_1 b_1 a_2 b_2 \cdots a_t b_t a_{t+1} x_t$.
\end{center}

\smallskip
\noindent
(iv) For a path of length $2k-2$ from $b$ to $b'$, we can extend the first path
in (iii) by adding $b_t$ and $b_{t+1}$.  The same can be done with the 
second path in (iii) to get a length $2k-2$ path from $y_{t-1}$ to $y_t$.  
For a path of length $2k-2$ from $b$ to $x$ or from $y$ to $a$, we can
delete $y_t$ or $b_1$, respectively, from the path in (ii).

\end{proof}


\subsection{Core paths}\label{core paths}

In this subsection, we define certain paths in $G_r(k)$ that are important for 
constructing paths of length $2k-1$.
A \emph{core path} in $G$ is a path $P$ such that 
\begin{enumerate}
\item $V(P) \cap B_i (k ) = \emptyset$ and $V(P) \cap Y_{i+1}(k) = \emptyset$ for all $i$, and
\item $|V (P) \cap A_i(k) | \leq 1$ and $|V(P) \cap X_{i+1}(k) | \leq 1$ for all $i$.
\end{enumerate}
If $P$ is a core path in $G_r(k)$, then $P$ intersects any $H_i(k)$ in at most two vertices.
An example of a core path in $G_9 (k)$ is 
\[
a_1, 11, 10, x_9,a_8 , 7 , 6 , x_5
\]
where $a_1 \in A_1 (k)$, $x_9 \in X_9 (k)$, $a_8 \in A_8 (k)$, and $x_5 \in X_5 (k)$.
In fact, any core path in $G_9 (k)$ from $a_1$ to $x_5$ is of the form
\[
a_1 , x_2 , 3 , a_4 , x_5 ~~~ \mbox{or}~~~ a_1, 11, 10, x_9,a_8 , 7 , 6 , x_5.
\]
Any core path between two vertices in $G_r (k)$ is unique up to the choice of
vertices from each $A_i (k)$ and $X_{i+1} (k)$.  This is easy to see as there
is a natural graph homomorphism $f : V( G_r (k) ) \rightarrow V(G_r)$ where
$G_r$ is the graph obtained by collapsing each $A_i(k)$ to a single vertex $a_i$, 
and doing the same for $B_i(k)$, $X_{i+1}(k)$, and $Y_{i+1}(k)$.
Thus, $G_r$ is simply an $(r+2)$-cycle with leaf vertices appended to each $a_i$ and each $x_{i+1}$.
We formalize this discussion in the following proposition but first,
given a path $P$, write 
\[
l (P)
\]
 for the length of $P$.  If the first vertex of $P$ is $z_1$ and the last vertex is $z_2$, we 
 call $P$ a \emph{$z_1$,$z_2$-path}.
 If $P$ is a core path from $z_1$ to $z_2$, we say that $P$ is a \emph{$z_1$,$z_2$-core path}.

\begin{proposition}\label{proposition core paths}
Let $r \geq 5$ be odd and let $k$ be an integer with $2k \geq r+5$.
If $z_1$ and $z_2$ are two nonadjacent vertices in $G_r(k)$ such that $z_1 ,z_2 \notin B_i(k) \cup Y_{i+1} (k)$
for all $i$, then there are two edge disjoint $z_1$,$z_2$-core paths $P_1$ and $P_2$ 
where 
\[
l(P_1) + l(P_2)  = r+2.
\]
In particular, there is a $z_1$,$z_2$-path $P$ with $l(P)  \in \{3,5, \dots, r \}$, and
$P$ intersects any $H_i(k)$ in at most two vertices.  
\end{proposition}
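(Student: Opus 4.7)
The plan is to pass to the quotient graph $G_r$ introduced in the preamble to the proposition. Define the projection $f : V(G_r(k)) \to V(G_r)$ that collapses each $A_i(k), B_i(k), X_{i+1}(k), Y_{i+1}(k)$ to the single vertex $a_i, b_i, x_{i+1}, y_{i+1}$ and fixes each middle cycle vertex. Because each adjacent pair of blown-up sets induces a complete bipartite graph in $G_r(k)$, for distinct $u, v$ one has $uv \in E(G_r(k))$ if and only if $f(u) \neq f(v)$ and $f(u)f(v) \in E(G_r)$; recall that $G_r$ is the $(r+2)$-cycle $C$ with a leaf appended to each $a_i$ and each $x_{i+1}$.

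Since $z_1, z_2 \notin B_i(k) \cup Y_{i+1}(k)$, the images $f(z_1)$ and $f(z_2)$ lie on $C$. I would first treat the main case $f(z_1) \neq f(z_2)$; here $z_1 \not\sim z_2$ in $G_r(k)$ forces $f(z_1)$ and $f(z_2)$ to be nonadjacent on $C$. These two vertices split $C$ into two internally disjoint arcs $Q_1, Q_2$ with $|E(Q_1)| + |E(Q_2)| = r+2$. I would lift each $Q_j$ to a walk $P_j$ from $z_1$ to $z_2$ in $G_r(k)$ by the following rule: whenever $Q_j$ passes through an interior vertex $a_\ell$ or $x_\ell$, pick any representative in $A_\ell(k)$ or $X_\ell(k)$; middle cycle vertices lift to themselves. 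The complete bipartite blowups guarantee that each consecutive pair along $P_j$ is a genuine edge, so $P_j$ is in fact a path of the same length as $Q_j$. No $B_i(k)$ or $Y_{i+1}(k)$ vertex appears, and no $A_\ell(k)$ or $X_\ell(k)$ is used twice, so $P_j$ is a $z_1, z_2$-core path.

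For edge disjointness, observe that $Q_1$ and $Q_2$ share no internal edge of $C$, so all interior edges of $P_1$ and $P_2$ project to distinct edges of $G_r$. The only edges that might coincide are those incident to $z_1$ or $z_2$, but the first edge of $P_1$ leaves $f(z_1)$ along the opposite $C$-neighbor from the first edge of $P_2$, so these are distinct edges of $G_r(k)$ as well; the same argument works at $z_2$. Summing lengths gives $l(P_1) + l(P_2) = r+2$. Since $r$ is odd, exactly one of these lengths, call it $l(P)$, is odd. The other has even length at least $2$, which forces $l(P) \leq r$, and $l(P) \geq 3$ since $l(P) = 1$ would contradict $z_1 \not\sim z_2$. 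Finally, because $P$ is a core path, $|V(P) \cap H_i(k)| = |V(P) \cap A_i(k)| + |V(P) \cap X_{i+1}(k)| \leq 2$ for every $i$.

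The only delicate point is the bookkeeping of the lift: checking that each step along the projected arc corresponds to a genuine edge of $G_r(k)$ (immediate from the complete bipartite blowup) and that the representative choices can be made so that $P_1$ and $P_2$ remain edge disjoint. The degenerate case $f(z_1) = f(z_2)$, which can occur only when $z_1$ and $z_2$ lie in a common $A_i(k)$ or $X_{i+1}(k)$, does not arise in the intended later applications of the proposition, but if needed it can be resolved by lifting the entire cycle $C$ to a single walk from $z_1$ to $z_2$ that uses two distinct representatives of the shared set.
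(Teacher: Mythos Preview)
Your argument is correct and is exactly the approach the paper intends: the proposition is presented there as a formalization of the immediately preceding discussion about the homomorphism $f:V(G_r(k))\to V(G_r)$, with no separate proof given, and your lift of the two arcs of $C$ is precisely that argument made explicit. One small quibble: your proposed fix for the degenerate case $f(z_1)=f(z_2)$ does not actually deliver what the proposition asserts---lifting the whole cycle gives a single path of length $r+2\notin\{3,5,\dots,r\}$, and in fact no $z_1,z_2$-core path exists at all when both endpoints lie in the same $A_i(k)$, since condition~2 of the definition forces $|V(P)\cap A_i(k)|\le 1$. You are right, however, that this case is never invoked in the applications, and the proposition should be read with the tacit hypothesis $f(z_1)\ne f(z_2)$.
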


One last definition is needed before moving onto the proof of Theorem \ref{theorem main}.
Let
\[
D_r = V(G_r (k) ) \backslash \left( \bigcup_i A_i(k) \cup B_i (k) \cup X_{i+1}(k) \cup Y_{i+1}(k) \right).
\]
For instance, in $G_9 (k)$, we have $D_9 = \{3,6,7,10,11 \}$.  
Any core path in $G_r (k)$ of length at least three must contain 
at least one vertex from $D_r$.  


\section{Proof of Theorem \ref{theorem main}}

In this section we prove Theorem \ref{theorem main}.  Fix an odd integer $r \geq 5$ and 
let $k$ be an integer with $2k \geq r +5$. 

\subsection{Showing $G_r (k)$ is $C_r$-free}\label{showing cr free}

Any subgraph of $G_r(k)$ obtained by removing a vertex in $D_r$ is bipartite,
so any odd cycle in $G_r(k)$ must contain every vertex in $D_r$.  
A shortest cycle in $G_r(k)$ containing all vertices in $D_r$ has length $r+2$.  
This is easiest to see in the case that $r\equiv 1 ( \textup{mod}~3)$ because then, 
any pair of distinct vertices in $D_r$ are at distance at least 3 from each other,
and $|D_r| = \frac{r+2}{3}$.  
When $r \equiv 2 ( \textup{mod}~3)$, there is exactly one pair 
of adjacent vertices in $D_r$, but all other pairs are distance at least 3 from each other.
Similarly, when $r \equiv 0 ( \textup{mod}~3)$, there are
exactly two pairs of adjacent vertices in $D_r$,
but all other pairs are distance 3 from each other.  Nevertheless, since 
\[
|D_r| = 
\left\{
\begin{array}{ll}
\frac{r}{3} +2     &  \mbox{if $r \equiv 0 ( \textup{mod}~3)$} \\ 
\frac{r+1}{3} +1 &  \mbox{if $r \equiv 2 ( \textup{mod}~3),$}
\end{array}
\right.
\]
 a shortest cycle containing all vertices of $D_r$ must have 
length at least $r+2$.


\subsection{Showing $G_r(k)$ is $C_{2k}$-free}\label{showing c2k free}

First we consider even cycles that contain at most two vertices in $D_r$.  
The longest even cycle $C$ using only vertices 
in $\{i -1 \} \cup H_i (k) \cup \{ i+2 \}$ has length $2k-2$.
Indeed, such a cycle $C$ must contain at least one vertex in $A(k)$ and 
at least one vertex in $X(k)$.  This implies $C$ contains at most $|X(k)| - 1 $
vertices from $Y(k) \cup \{i +1 \}$, and 
at most $|A(k)| -1 $ vertices from $B(k) \cup \{i -1 \}$.  
Thus, 
\[
l(C) \leq 2 |A(k)| -1 + 2 |X(k)|  -1 = 2k-2.
\] 

Now consider 
a cycle $C$ 
that uses at least three vertices in $D_r$.
Such a cycle must then use all vertices in $D_r$, otherwise we could 
find a cut vertex on $C$.  
A cycle using all vertices in $D_r$ must have odd length.  This 
is because the subpath of $C$ contained in any $H_i (k)$ is a path between
some $a_i \in A_i (k)$ and $x_{i+1} \in X_{i+1 } (k)$ of odd length,
and all $a_i$,$x_{i+1}$-paths in $H_i (k)$ have odd length.  Therefore, we cannot change the 
parity of $l(C)$ by shortening or lengthening the subpath of $C$ contained in 
$H_i(k)$.


\subsection{Showing $C_{2k}$-saturation in $G_r(k)$}\label{showing saturation}

We must show that any two nonadjacent vertices are joined by a path of length $2k-1$.
We will start by examining paths that have at least one endpoint in some $A_i(k)$. When $k$ is even,
$G_r (k)$ has an automorphism that interchanges $A_i(k)$ with some $X_j(k)$.  When $k$ is odd,
there is no such automorphism because $A_i(k)$ has more vertices than $X_i(k)$.
Despite this, the arguments that work below for $A_i (k)$ also work for $X_i(k)$.
Furthermore, Lemma \ref{lemma paths} is symmetric with respect to $A(k)$ and $X(k)$,
and with respect to $B(k)$ and $Y(k)$.

\begin{center}
\underline{Paths with an endpoint in $A_i (k)$}
\end{center}

Let $a_i \in A_i (k)$.  Let $z$ be a vertex that is not adjacent to $a_i$.  We consider two cases.

\medskip
\noindent
\textbf{Case 1:} $z \in H_i(k)$

\smallskip

In this case, $z \in A_i(k)$ or $z\in Y_{i+1}(k)$.  
Either way, there is an $a_i$,$x_{i+1}$-core path $P_1$ whose first edge is $\{ a_i , i-1 \}$, 
whose last edge is $\{i+2 , x_{i+1} \}$ where $x_{i+1} \in X_{i+1}(k)$,
and $l(P_1) = r+1 $.
Since $G_r(k)$ contains at least two copies of $H(k)$,
there is an index $j \neq i$ such 
that $V(P_1) \cap V( H_j (k) ) = \{ a_j , x_{j+1} \}$ for some $a_j \in A_j (k)$,
$x_{j+1} \in X_{j+1} (k)$.  By Lemma \ref{lemma paths}, we can replace the
 edge $a_j x_{j+1}$ with an $a_j$,$x_{j+1}$-path $P_2$ of length $2k -r -2$
 where $V(P_2) \subseteq V( H_j (k) )$.  This gives an $a_i$,$x_{i+1}$-path $P_3$ of length
 \[
 l(P_3) = l(P_1) -1 + l(P_2) = r +1 -1 + 2k -r -2 = 2k -2.
 \]
 Lemma \ref{lemma paths} applies since $2k-r-2$ is odd, and 
 $1 \leq 2k -r -2 \leq 2k -3$ since we have assumed that $2k \geq r+5$.  
 We can then extend $P_3$ to a path of length $2k-1$ from $a_i$ to $a_i' \in A_i(k)$, or from
 $a_i$ to $y_{i+1} \in Y_{i+1}(k)$ by adding the edge $x_{i+1} a_i'$ or $x_{i+1} y_{i+1}$.

\medskip
\noindent
\textbf{Case 2:} $z \notin H_i(k)$

\smallskip

Since $z \notin H_i (k)$ and $z$ is not adjacent to $a_i$, we know that 
\[
z \notin \{ i-1 \} \cup A_i (k) \cup B_i (k) \cup X_{i+1}(k) \cup Y_{i+1}(k).
\]
We will consider two subcases.

\medskip
\noindent
\textbf{Subcase 2.1:} There is an $a_i$,$z$-path $P_1$ with $l(P_1) \in \{3,5, \dots , r \}$
and the first edge of $P_1$ is $a_i x_{i+1}$ for some $x_{i+1} \in X_{i+1} (k)$.

\smallskip

By Lemma \ref{lemma paths}, we may replace the edge $a_i x_{i+1}$ on $P_1$ with 
an $a_i ,x_{i+1}$-path $P_2$ with length $l(P_2) = 2k - l(P_1)$ 
were $V(P_2) \subseteq V(H_i (k))$.  This gives an $a_i$,$z$-path $P_3$ with length
\[
l(P_3) = l(P_1) - 1 + l(P_2) = 2k-1.
\]
Lemma \ref{lemma paths} applies since $l(P_1) \in \{3,5, \dots ,r \}$ so that 
$l(P_2) = 2k  - l(P_1)$ is odd, and $1 \leq 2k - l(P_1) \leq 2k-3$ as 
we have assumed $2k \geq r+5$.

\medskip
\noindent
\textbf{Subcase 2.2:} There is no $a_i$,$z$-path $P_1$ with $l(P_1) \in \{3,5, \dots , r \}$
where the first edge of $P_1$ is $a_i x_{i+1}$ for some $x_{i+1} \in X_{i+1} (k)$.

\smallskip
By Proposition \ref{proposition core paths}, 
there must be an $a_i$,$z$-path $P_2$ with $l(P_2) \in \{3,5, \dots ,r \}$ and 
where the first edge of $P_2$ is $\{a_i , i-1 \}$ (if $i=1$, then $i-1$ is replaced with $r+2$).
Observe $ V(P_2) \cap V ( H_i (k))  = \{ a_i \}$ 
since $z \notin H_i (k)$.  
We apply Lemma \ref{lemma paths} to find a $a_i$,$a_i'$-path $P_3$ where
$a_i' \in A_i (k)$, $V(P_3) \subseteq H_i (k)$, and $l(P_3) = 2k - 1 - l(P_2)$.  Note that since 
$l(P_2) \in \{3,5, \dots ,r \}$, we have $2k-1-r \leq l(P_3) \leq 2k-4$ where the first inequality follows
from the assumption $2k \geq r+5$.  If $P_4$ is the path obtained from $P_2$ by replacing $a_i$ with $a_i'$,
then the path $P_3 P_4$ is an $a_i$,$z$-path of length
\[
 l(P_3) + l(P_2) = 2k -1 - l(P_2) + l(P_2) = 2k-1.
\]

\smallskip

This completes the analysis in Case 2. 

 We conclude that for any vertex $a_i \in A_i (k)$ and 
any vertex $z$ not adjacent to $a_i$, there is a path of length $2k-1$ from $a_i$ to $z$.  
Because Lemma \ref{lemma paths} is symmetric with respect to $A(k)$ and $X(k)$,
the same arguments used above apply to $X_{i+1}(k)$.  Therefore, for any vertex 
$z_1 \in A_i (k) \cup X_{i+1} (k)$ and any vertex $z_2$ not adjacent to $z_1$,
there is a $z_1$,$z_2$-path of length $2k-1$.

\begin{center}
\underline{Paths with an endpoint in $B_i(k)$}
\end{center}

Let $b_i \in B_i (k)$ and let $z$ be a vertex not adjacent to $b_i$.  By what we have shown so 
far, we can assume that $z \notin A_j (k) \cup X_{j+1}(k)$ for every $j$.  Again, it will be convenient to
consider two cases.

\medskip
\noindent
\textbf{Case 1:} $z \in H_i(k)$

First suppose $z \in B_i (k)$, say $z = b_i'$.  Consider the 
path 
\[
P_1 = b_i,a_i,i-1,\dots,i+2,x_{i+1},a_i',b_i'
\]
where the subpath $a_i,i-1,\dots,i+2,x_{i+1}$
is a core path of length $r+1$, and $a_i' \in A_i (k) \backslash \{a_i \}$.  We have 
$l(P_1) = r+4$.  Since $G_r (k)$ contains two copies of $H(k)$, there is an index $j \neq i$
such that $V(P_1) \cap V( H_j (k)) = \{ a_j , x_{j+1} \}$ for some 
$a_j \in A_j (k)$, $x_{j+1} \in X_{j+1} (k)$.  By Lemma \ref{lemma paths}, we can replace the edge
$a_j x_{j+1}$ with an $a_j$,$x_{j+1}$-path of length $2k -r -4 $ to obtain
a $b_i$,$b_i'$-path $P_2$ of length $l(P_2) = l(P_1) - 1 + 2k -r -4 = 2k -1$.  
Lemma \ref{lemma paths} applies since $2k-r-4 $ is odd, and $1 \leq 2k -r -4 \leq 2k -3$ since 
we have assumed that $2k \geq r+5$.  

Now suppose $z=y_{i+1} \in Y_{i+1} (k)$. 
By Lemma \ref{lemma paths}, there is a path of length $2k-1$ from $b_i$ to $y_{i+1}$.

\medskip
\noindent
\textbf{Case 2:} $z \notin H_i(k)$

Again, Case 2 into divided into two subcases.

\medskip
\noindent
\textbf{Subcase 2.1:} There is a $b_i$,$z$-path $P_1$ with 
$l(P_1) \in \{3,5, \dots , r \}$, and the first two edges of $P_1$ are 
$b_i a_i$ and $a_i x_{i+1}$ for some $a_i \in A_i (k)$, $x_{i+1} \in X_{i+1} (k)$.  

\smallskip

By Lemma \ref{lemma paths}, we may replace the subpath $b_i a_i x_{i+1}$ on $P_1$ with 
a $b_i$,$x_{i+1}$-path $P_2$ of length $l(P_2) = 2k+1 - l(P_1)$ where 
$V(P_2) \subseteq V ( H_i (k))$.  Here we are using the fact that 
$z \notin H_i (k)$, and that $l(P_1) \in \{3, 5, \dots , r \}$ implies 
$l(P_2)$ is even and
\[
2k -r +1 \leq l(P_2) \leq 2k-2.
\]
This gives a $b_i$,$z$-path of length $2k-1$.

\medskip
\noindent
\textbf{Subcase 2.2:} There is no $b_i$,$z$-path $P_1$ with 
$l(P_1) \in \{3,5, \dots , r \}$ where the first two edges of $P_1$ are 
$b_i a_i$ and $a_i x_{i+1}$ for some $a_i \in A_i (k)$, $x_{i+1} \in X_{i+1} (k)$.  

\smallskip
By Proposition \ref{proposition core paths}, 
there must be a $b_i$,$z$-path $P_2$ with $l(P_2) \in \{3,5, \dots ,r \}$ and 
the first two edges of $P_2$ are $b_i a_i$ and $\{ a_i , i-1 \}$ where
$a_i \in A_i (k)$ (if $i=1$, then $i-1$ is replaced with $r+2$).
Since $ V(P_2) \cap V ( H_i (k))  = \{ b_i, a_i \}$,
we can replace the edge $b_i a_i$ with a $b_i$,$a_i$-path $P_3$ of length
$l(P_3) = 2k  - l (P_2)$ where $V(P_3) \subseteq V(H_i(k))$.  Note
that since $l(P_3) \in \{3,5, \dots ,r \}$, 
\[
2k -r \leq l(P_3) \leq 2k -3
\]
and $2k -r \geq 1$ by our assumption $2k \geq r+5$.  This gives 
a path of length $2k-1$ from $b_i$ to $z$.

\smallskip

This completes the analysis in Case 2.  We conclude that for any vertex $b_i \in B_i (k)$ and 
any vertex $z$ not adjacent to $b_i$, there is a path of length $2k-1$ from $b_i$ to $z$.  
Just like in the case of $A_i(k)$, the symmetry of Lemma \ref{lemma paths} with respect to $B(k)$ and 
$Y(k)$ implies that we have the same result for paths with an endpoint in $Y_i(k)$.

\begin{center}
\underline{Paths with both endpoints in $D_r$}
\end{center}

Let $i$ and $j$ be two vertices in $D_r$ that are not adjacent.  There is an
$i$,$j$-core path $P_1$ 
with $l(P_1) \in \{3,5, \dots , r \}$ and $V(P_1) \cap V(H_t (k) ) = \{a_t , x_{t+1} \}$
for some $t$.  By Lemma \ref{lemma paths}, we may replace the edge $a_t x_{t+1}$ 
with an $a_t$,$x_{t+1}$-path $P_2$ of length $l(P_2) = 2k - l(P_1)$ 
where 
$V(P_2) \subseteq V(H_t (k))$.
This gives an $i$,$j$-path of length $2k-1$.  Lemma \ref{lemma paths} applies since 
$2k - l(P_1)$ is odd and $2k -r \leq l(P_2) \leq 2k -3$.


\subsection{Finishing the proof of Theorem \ref{theorem main}}\label{finish proof}

We have shown that $G_r(k)$ is a $C_r$-free graph
that is $C_{2k}$-saturated.  The number of vertices in $G_r(k)$ is 
$\frac{r+2}{3} (2k+1)$ in the case that $r \equiv 1 (\textup{mod}~3)$,
$\frac{r+1}{3} (2k +1)$ in the case that $r \equiv 2 (\textup{mod}~3)$, and
$\frac{2rk}{3} + \frac{r+6}{3}$ when $r \equiv 0 ( \textup{mod}~3)$. 
In each case,
 $G_r (k)$ has at least $2kr$ vertices.
  
Let $t \geq 1$ and let $G_{r,t} (k)$ be the graph obtained by
adding $t$ new vertices $b_1', \dots ,b_t'$ to $G_r (k)$ 
where each $b_i'$ is adjacent to all vertices in $A_1(k)$, and 
these are the only vertices adjacent to $b_i'$.  
 Roughly speaking, we are duplicating vertices in $B_1(k)$   
(this idea
 was used in \cite{kmtt} to prove $\textup{sat}(n,C_3,C_{2k})=0$ for $k \geq 3$, $n \geq 2k+2$).  
 We complete the proof of Theorem \ref{theorem main}
 by showing that $G_{r,t}(k)$ is $C_r$-free and $C_{2k}$-saturated.
 
Let $B_1'(k) = B_1(k) \cup \{b_1' , \dots ,b_t' \}$.
Any cycle $C$ in $G_{r,t}(k)$ that contains $s$ vertices in $B_1'(k)$ must contain 
at least $s$ vertices in $A_1 (k)$.  Thus, at most $|A_1(k) | = |B_1(k)|$ vertices 
in $B_1'(k)$ can be used on $C$.  Since all vetices in the independent 
set $B_1'(k)$ have the same neighborhood, we can 
replace all vertices on $C$ that are in $B_1'(k)$ with vertices in $B_1(k)$ that are not on $C$.  
Therefore, adding 
$\{b_1', \dots ,b_t' \}$ to $G_r(k)$ to obtain $G_{r,t}(k)$ will not create cycles of new lengths.
Since $G_r(k)$ is $C_r$-free and $C_{2k}$-free, so is $G_{r,t}(k)$. 

We finish by showing $C_{2k}$-saturation in $G_{r,t}(k)$.   
Let $b_i' \in \{b_1' , \dots , b_t' \}$ and let $z$ be a vertex not adjacent to $b_i'$.  In $G_r(k)$,
there is an automorphism that interchanges any two given vertices in $B_1(k)$ so we can assume
that $z \neq b_1 \in B_1(k)$.  If $z \notin B_1'(k)$, then in $G_r(k)$, there 
is a $b_1$,$z$-path of length $2k-1$.  Replacing $b_1$ on this path with $b_i'$ gives 
a path of length $2k-1$ from $b_i'$ to $z$.  If $z\in B_1'(k)$, then we can choose 
a path of length $2k-1$ in $G_r(k)$ from $b_1 \in B_1(k)$ to $b_2 \in B_1(k)$.  
Replacing $b_1$ and $b_2 $ on this path with $b_i'$ and $z$, respectively, gives a path of length $2k-1$ from 
$b_i'$ to $z$.  
 
 
 \section{Acknowledgements}
 
 The author would like to thank J\"urgen Kritschgau, Abhishek Methuku, and Michael Tait
 for many helpful discussions.  



\begin{thebibliography}{10}

\bibitem{as}
N.\ Alon, C.\ Shikhelman,
Many $T$ copies in $H$-free graphs, 
{\em J.\ Combin.\ Theory Ser}.\ B {\bf 121} (2016), 146--172.  


\bibitem{barefoot}
C.\ Barefoot, L.\ Clark, R.\ Entringer, T.\ Porter, L.\ Sz\'ekely, Zs.\ Tuza,
Cycle-saturated graphs of minimum size,
{\em Discrete Math}.\ 150 (1996), no.\ 1-3, 31--48.  

\bibitem{chen1}
Chen, Ya-Chen,
Minimum $C_5$-saturated graphs,
{\em J.\ Graph Theory} 61 (2009), no.\ 2, 111--126.

\bibitem{chen2}
Chen, Ya-Chen,
All minimum $C_5$-saturated graphs,
{\em J.\ Graph Theory} 67 (2011), no.\ 1, 9--26.

\bibitem{ehm}
P.\ Erd\H{o}s, A.\ Hajnal, J.\ W.\ Moon,
A problem in graph theory,
{\em Amer.\ Math.\ Monthly} {\bf 71} 1964 1107--1110.


\bibitem{sat survey}
J.\ R.\ Faudree, R.\ J.\ Faudree, and J.\ R.\ Schmitt,
A survey of minimum saturated graphs, 
{\em Electron. J. Combin.}, DS19, (2011).

\bibitem{fk}
Z.\ F\"uredi, Y.\ Kim, 
Cycle-saturated graphs with minimum number of edges, 
{\em J.\ Graph Theory} 73 (2013), no.\ 2, 203--215.  

\bibitem{gould}
R.\ Gould, T.\ \L uczak, J.\ Schmitt,
Constructive upper bounds for cycle-saturated graphs of minimum size,
{\em Electron.\ J.\ Combin}.\ 13 (2006), no.\ 1, Research Paper 29, 19 pp.

\bibitem{kt}
L.\ K\'{a}szonyi, Zs.\ Tuza,
Saturated graphs with minimal number of edges, 
{\em J.\ Graph Theory} 10 (1986), no.\ 2, 203--210.  

\bibitem{kmtt}
J.\ Kritschgau, A.\ Methuku, M.\ Tait, C.\ Timmons,
Few $T$-copies in $H$-saturated graphs,
arXiv:1810:00939v1 Oct 2018.  

\bibitem{zls}
M.\ Zhang, S.\ Luo, M.\ Shigeno, 
On the number of edges in a minimum $C_6$-saturated graph,
{\em Graphs Combin}.\ 31 (2015), no.\ 4, 1085--1106.  

\end{thebibliography}
 \end{document}